\newcommand{\ep}{\hfill\rule{0.15cm}{0.35cm}\vskip 0.3cm}
\newtheorem{theorem}{Theorem}[section]
\newtheorem{lemma}{Lemma}[section]
\newtheorem{remark}{Remark}[section]
\newenvironment{proof}[1][Proof]{\begin{trivlist}
\item[\hskip \labelsep {\bfseries #1}]}{\ep\end{trivlist}}
\begin{document}

\title{An Abstract Stabilization Method with Applications to
Nonlinear Incompressible Elasticity
}
\author{Qingguo Hong, Chunmei Liu,  Jinchao Xu \\
{\small Department of Mathematics, The Pennsylvania State University,}\\
{\small Unverisity Park, PA 16802, USA}\\
{\small e-mail: huq11@psu.edu}\\
{\small College of Science, Hunan University of Science and Engineering, }\\
{\small Yongzhou 425199, Hunan, People's Republic of China}\\
{\small e-mail: liuchunmei0629@163.com}\\
{\small Department of Mathematics, Pennsylvania State University,}\\
{\small University Park, PA 16802, USA}\\
{\small e-mail: xu@math.psu.edu}}
\date{}
\maketitle

\begin{abstract}
In this paper, we propose and analyze an abstract stabilized mixed finite element
framework that can be applied to nonlinear incompressible elasticity problems. In the abstract stabilized
framework, we prove that any mixed finite element method that satisfies the discrete inf-sup condition can be
modified so that it is stable and optimal convergent as long as the mixed continuous problem is stable. Furthermore,
we apply the abstract stabilized framework to nonlinear incompressible elasticity problems and present
numerical experiments to verify the theoretical results.
\end{abstract}

{\bf Keywords:}~nonlinear incompressible problem; mixed finite
methods; stability

{\bf MR (2000) Subject Classifications: } 65N30

\section{Introduction}
\label{INTR}
\setcounter{equation}{0}
Many finite element schemes perform very well in
terms of both accuracy and stability for linear elastic
problems (see \cite{reddy1995stability,braess2004uniform,
houston2006hp,hansbo2002discontinuous,hong2016robust,wu2017interior, gong2019new, wang2020mixed, hong2020extended}), including in highly
constrained situations such as incompressible cases. However, it is
well-known that though such schemes can be extended to nonlinear
elasticity cases (for instance, large deformation elasticity problems) the same level of  stability is by no 
means guaranteed (see
 \cite{auricchio2005stability,wriggers1996note,lovadina2003enhanced,
pantuso1997stability}).

In \cite{eyck2008adaptive,eyck2008adaptiveM}, a stabilized
discontinuous Galerkin method for nonlinear compressible elasticity
was introduced. Because the stabilization term adapts to the solution of the problem by
locally changing the size of a penalty term on the appearance of discontinuities,
it is called an adaptive stabilization
strategy. This same adaptive stabilization strategy can also be
found in \cite{ten2010adaptive}. Although all three papers discuss compressible and nearly
incompressible nonlinear elasticity problems, they do not address the exactly
incompressible problems.

In \cite{auricchio2010importance}, an isogeometric ``stream function" formulation
was used to exactly enforce the linearized incompressibility constraint.
In fact, the exact satisfaction of the linearized incompressibility
constraint leads to a numerical approximation of the stability
range that converges to the exact one (i.e., the one for the continuum
problem). As the stream function is given by a fourth-order PDE,
the high regularity of the NURBS shape functions \cite{hughes2005isogeometric} 
must be used there and the stream function in 3dimensions is not unique.

Brezzi, Fortin, and Marini \cite{brezzimixed} presented a modified mixed formulation for second-order
elliptic equations and linear elasticity problems that automatically satisfied the coercivity condition
on the discrete level. Using the same technique and applying stable Stokes elements to a modified 
formulation for Darcy-Stokes-Brinkman models, Xie, Xu and Xue \cite{xie2008uniformly} 
obtained a uniformly stable method.

In this paper, motivated by the modified formulation used
in \cite{brezzimixed,xie2008uniformly} for second-order equations and the Darcy-Stokes-Brinkman
models respectively, we devise a stabilization strategy for the classical mixed finite method
designed for Stokes equations and obtain a modified method for nonlinear incompressible elasticity.
As indicated in \cite{auricchio2005stability}, a usual
mixed finite element method that is stable for Stokes equations such as the MINI element
is not stable for nonlinear incompressible elasticity
even though the continuous problem is stable.
We note that, for the usual mixed finite method, the unphysical instability
is caused by the fact that the discrete solution is not exactly divergence-free.
Hence, we rewrite the continuous problem.
Based on that we design a stabilization strategy involving the divergence of
the discrete solution. We prove that the modified mixed finite element  method can
remove the unphysical instability and lead to optimal convergence.
Hence, all stable Stokes elements can also be made stable for discrete
nonlinear elasticity problems. 

The rest of the paper is organized as follows.
In section \ref{TFSI}, we present the finite strain incompressible elasticity problem and
obtain the linearized formulation of the nonlinear elasticity problem.
In section \ref{ASOM}, we propose the stabilization strategy and derive the modified mixed finite
element method in an abstract framework, and in section \ref{ATTN} we apply
this modified method to the nonlinear incompressible elasticity problem.
Furthermore, we obtain an optimal approximation of the modified finite element
method. In section \ref{NUEX}, we give two simple examples and the corresponding numerical
experiments to verify the stability and accuracy of the modified finite element
method. We present our conclusions in section \ref{CONL}.

\section{Motivation: The finite strain incompressible elasticity problem}
\label{TFSI}
\setcounter{equation}{0}
To study the finite strain incompressible elasticity problem, we adopt what is known as the material
description in this paper. Given a reference configuration $\Omega\subset R^d~(1\leq d\leq 3)$
for a d-dimensional bounded material body $\mathcal{B}$, the
deformation of $\mathcal{B}$ can be described by the map
$\hat{\varphi}: \Omega \rightarrow  R^d$ defined by
$$
\hat{\varphi}(X)=X+\hat{u}(X),
$$
where~$X=(X_1,\cdots ,X_d)$ denotes the coordinates of a material
point in the reference configuration and $\hat{u}(X)$ represents the
corresponding displacement vector. Following the standard notation (see
\cite{auricchio2010importance}), we introduce the deformation
gradient $\hat{F}$ and the right Cauchy--Green deformation tensor
$\hat{C}$ by setting
\begin{equation}\label{E1}
\hat{F}=I+\nabla \hat{u},~~~~\hat{C}=\hat{F}^{T}\hat{F},
\end{equation}
where I is the second-order identity tensor and $\nabla$ is the
gradient operator with respect to the coordinate $X$.

For a homogeneous neo-Hookean material, for example, latex and rubber, we define (see \cite{bonet1997nonlinear}) the potential
energy function as
\begin{equation}\label{E2}
\psi(\hat{u})=\frac{1}{2}\mu[I:\hat{C}-d]-\mu
\ln\hat{J}+\frac{\lambda}{2}(\ln\hat{J})^2,
\end{equation}
where $\lambda$ and $\mu$ are positive constants, ``:''represents
the usual inner product for second-order tensors, and
$\hat{J}=\det\hat{F}$ is the deformation gradient Jacobian.

When we introduce he pressure-like variable (or simply pressure)
$\hat{p}=\lambda \ln\hat{J}$, the potential energy (\ref{E2}) can be
equivalently written as the following function of $\hat{u}$ and
$\hat{p}$ (still denoted as $\psi$ with a little abuse of notation):
$$
\psi(\hat{u},\hat{p})=\frac{1}{2}\mu[I:\hat{C}-d]-\mu
\ln\hat{J}+\hat{p}\ln\hat{J}-\frac{1}{2\lambda}(\hat{p})^2.
$$

When the body $\mathcal{B}$ is subject to a given load $b = b(X)$
per unit volume in the reference configuration, the total elastic
energy functional reads as
\begin{equation}\label{E3}
\Pi(\hat{u},\hat{p})=\displaystyle\int_\Omega \psi(\hat{u},\hat{p}) -\displaystyle\int_\Omega
b\cdot \hat{u}.
\end{equation}

According to the standard variational principles, the equilibrium is
derived by searching for critical points of (\ref{E3}) in suitable
admissible displacement and pressure spaces $\hat{V}$ and
$\hat{Q}$. The corresponding Euler--Lagrange equations arising from
(\ref{E3}) lead to this solution:
Find~$(\hat{u},\hat{p})\in \hat{V}\times\hat{Q}$~such~that
\begin{equation}\label{E4}
\left\{
\begin{array}{l}
\mu\displaystyle\int_\Omega\hat{F}:\nabla v+
\displaystyle\int_\Omega(\hat{p}-\mu)\hat{F}^{-T}: \nabla v=\displaystyle\int_\Omega
b\cdot v~~\forall v\in V,\\
                  \displaystyle\displaystyle\int_\Omega(\ln\hat{J}-\frac{\hat{p}}{\lambda})q=0~~\forall q\in
                  Q,
\end{array}
\right.
\end{equation}
where $V$ and $Q$ are the admissible variation spaces for the
displacements and pressures, respectively. Also note that in
(\ref{E4}), the linearization of the deformation
gradient Jacobian is
$$
DJ(\hat{u})[v]=J(\hat{u})F(\hat{u})^{-T}:\nabla
v=J(\hat{u})F(\hat{u}):\nabla v~~~\forall v\in V.
$$
We now focus on the case of an incompressible material, which
corresponds to taking the limit $\lambda\rightarrow +\infty$ in
(\ref{E4}). Hence, our problem becomes:
Find~$(\hat{u},\hat{p})\in \hat{V}\times\hat{Q}$~such~that
\begin{equation}\label{E5}
\left\{
\begin{array}{l}
\mu\displaystyle\int_\Omega\hat{F}:\nabla v+
\displaystyle\int_\Omega(\hat{p}-\mu)\hat{F}^{-T}: \nabla v=\displaystyle\int_\Omega
b\cdot v~~\forall v\in V,\\
                  \displaystyle\int_\Omega q\ln\hat{J}=0~~\forall q\in
                  Q,
\end{array}
\right.
\end{equation}
or, in residual form:
Find~$(\hat{u},\hat{p})\in \hat{V}\times\hat{Q}$~such~that
\begin{equation}
\left\{
\begin{array}{l}
\mathcal{R}_u((\hat{u},\hat{p}),v)=0~~\forall v\in V,\\
                  \mathcal{R}_p((\hat{u},\hat{p}),q)=0~~\forall q\in
                  Q,
\end{array}
\right.
\end{equation}
where
\begin{equation}
\left\{
\begin{array}{l}
\mathcal{R}_u((\hat{u},\hat{p}),v):=\mu\displaystyle\int_\Omega\hat{F}:\nabla v+
\displaystyle\int_\Omega(\hat{p}-\mu)\hat{F}^{-T}: \nabla v-\displaystyle\int_\Omega
b\cdot v,\\
                  \mathcal{R}_p((\hat{u},\hat{p}),q):=\displaystyle\int_\Omega
                  q\ln\hat{J}.
\end{array}
\right.
\end{equation}
We now derive the linearization of problem (\ref{E5}) around a
generic point $(\hat{u},\hat{p})$. Observing that
$$
D\hat{F}(\hat{u})[u]=-\hat{F}^{-T}(\nabla u)^T \hat{F}^{-T}~~\forall
u\in V,
$$
we easily get the problem for the infinitesimal increment $(u,p)$:
Find~$(u,p)\in V\times Q$~such~that
\begin{equation}\label{E6}
\left\{
\begin{array}{l}
\tilde{a}(u,v)+b(v,p)=-\mathcal{R}_u((\hat{u},\hat{p}),v)
~\forall v\in V,\\
                 \tilde{ b}(u,q)=-
                  \mathcal{R}_p((\hat{u},\hat{p}),q)~~\forall q\in
                  Q,
\end{array}
\right.
\end{equation}
where
\begin{equation}\label{E6}
\left\{
\begin{array}{l}
\tilde{a}(u,v)=\mu\displaystyle\int_\Omega\nabla u:\nabla v+
\displaystyle\int_\Omega(\mu-\hat{p})(\hat{F}^{-1}\nabla u)^T: \hat{F}^{-1}\nabla
v,\\
 \tilde{b}(u,q)=\displaystyle\int_\Omega q\hat{F}^{-T}: \nabla u.
\end{array}
\right.
\end{equation}
\begin{remark}
Since problem (\ref{E6}) is the linearization of problem (\ref{E5}),
 it can be interpreted as the generic step of a
Newton-like iteration procedure for the solution of the nonlinear
problem (\ref{E5}).
\end{remark}
\begin{remark}
Taking $(\hat{u},\hat{p})=(0,0)$ in (\ref{E6}), we immediately
recover the classical linear incompressible elasticity problem for
small deformations; i.e., we find~$(u,p)\in V\times Q$~such~that
\begin{equation}
\left\{
\begin{array}{l}
2\mu\displaystyle\int_\Omega\epsilon (u):\epsilon  (v)+\displaystyle\int_\Omega p~\hbox{div}
v=\int_\Omega b\cdot v
~\forall v\in V,\\
                  \displaystyle\int_\Omega q~\hbox{div} u=0~~\forall q\in
                  Q,
\end{array}
\right.
\end{equation}
where $\epsilon(u)=\frac{(\nabla u)^T+\nabla u}{2}$ denotes the
symmetric gradient operator.
\end{remark}

We now note that the Piola identity
$\hbox{div}(\hat{J}\hat{F}^{-T})=0$ and $\hat{J}=1$ give
$\hbox{div}\hat{F}^{-T}=0$. Hence, we have
$$
\hbox{div}(\hat{F}^{-T}u)=\hbox{div}\hat{F}^{-T}\cdot
u+\hat{F}^{-T}: \nabla u=\hat{F}^{-T}: \nabla u.
$$
Let $\hat{F}^{-T}u=w$, we then consider the following problem: Find
$(w,p)\in V\times Q$~such~that
\begin{equation}\label{E7}
\left\{
\begin{array}{l}
a(w,v)+b(v,p)=-\mathcal{R}_u((\hat{u},\hat{p}),v)
~\forall v\in V,\\
                  b(w,q)=-
                  \mathcal{R}_p((\hat{u},\hat{p}),q)~~\forall q\in
                  Q,
\end{array}
\right.
\end{equation}
where
\begin{equation}\label{E10}
\left\{
\begin{array}{l}
a(w,v)=\tilde{a}(\hat{F}w,\hat{F}v)=\mu\displaystyle\int_\Omega\nabla
(\hat{F}w):\nabla (\hat{F}v)+\\
~~~~~~~~~~~~~~~~~~~~~~~~~~~~~~\displaystyle\int_\Omega(\mu-\hat{p})(\hat{F}^{-1}\nabla
(\hat{F}w))^T: \hat{F}^{-1}\nabla
(\hat{F}v),\\
 b(w,p)=\displaystyle\int_\Omega
p\hbox{div} w.
\end{array}
\right.
\end{equation}

Let $V_h \subset V ~\hbox{and}~ Q_h \subset Q$ be the compatible
finite element spaces. The discrete problem of (\ref{E7}) reads:
Find~$(w_h,p_h)\in V_h\times Q_h$~such~that
\begin{equation}\label{E8}
\left\{
\begin{array}{l}
a(w_h,v_h)+b(v_h,p_h)=-\mathcal{R}_u((\hat{u},\hat{p}),v_h)
~\forall v_h\in V_h,\\
                  b(w_h,q_h)=-
                  \mathcal{R}_p((\hat{u},\hat{p}),q_h)~~\forall q_h\in
                  Q_h,
\end{array}
\right.
\end{equation}
where $a(\cdot,\cdot)~\hbox{and}~b(\cdot,\cdot)$~are defined by
(\ref{E10}).

 In some cases, it is possible that the continuous problem
(\ref{E7}) is stable (or well-posed) but that the discrete problem
(\ref{E8}) is not (see \cite{auricchio2005stability}). In such cases,
we say that there is an unphysical instability
caused by the discretization.

\section{Abstract stabilization strategy for mixed approximation}
\label{ASOM}
\setcounter{equation}{0}
As shown at the end of the previous section, it is necessary to establish stable
discretization when the continuous problem
(\ref{E7}) is stable. So in this section, we propose an abstract
stability strategy framework for the mixed approximation.

Let $V$ and $Q$ each be a Hilbert space, and assume that
$$
a: V \times V \rightarrow \mathbb{R},~b: V\times Q \rightarrow \mathbb{R}
$$
are continuous bilinear forms. Let $f \in V^{'}$ and $g \in Q^{'}$. We denote both the dual
pairing of $V$ with $V^{'}$ and that of $Q$ with $Q^{'}$ by $\langle\cdot,\cdot\rangle$.
We consider the following problem:
Find~$(w,p)\in V\times Q$~such~that
\begin{equation}\label{A0}
\left\{
\begin{array}{l}
a(w,v)+b(v,p)=\langle f,v \rangle
~~~~\forall v \in V,\\
                  b(w,q)=\langle g,q \rangle~~~~\forall q\in Q.
\end{array}
\right.
\end{equation}
We associate a mapping $B$ with the form $b$:
$$
B:V\rightarrow Q^{'}:\langle Bw,q \rangle=b(w,q)~~\forall q\in Q.
$$
Define
$$
K=\hbox{ker}(B):=\{v\in V: b(v,q)=0~~\forall q \in Q\}.
$$
As is well-known, the continuous problem (\ref{A0}) is well-posed
under the following assumptions:
\begin{itemize}
  \item $A_1:$ the inf-sup condition, i.e., it holds that
  \begin{equation}
\inf_{q\in Q}\sup_{v\in V} \frac{b(v,q)}{\|v\|_V\|q\|_Q}  \geq\beta>0,
  \end{equation}
where~$\|\cdot\|_{V} ~\hbox{and}~ \|\cdot\|_{Q}$~are norms on the
spaces~$V$~and~$Q$, respectively;
  \item $A_2:$ the coercivity on the kernel space, i.e., it holds that
   \begin{equation}\label{C1}
  a(v,v)\geq \alpha \|v\|_V^2 ~~ \forall~  v\in K, \hbox{for some}~ \alpha>0.
  \end{equation}
\end{itemize}


Now we rewrite the problem (\ref{A0}) in an equivalent form as follows:
Find~$(w,p)\in V\times Q$~such~that
\begin{equation}\label{A1}
\left\{
\begin{array}{l}
A(w,v)+b(v,p)=\langle f,v \rangle+M(g,Bv)_{Q^{'}}
~~~\forall v \in V,\\
                  b(w,q)=\langle g,q \rangle~~~\forall q\in Q,
\end{array}
\right.
\end{equation}
where $A(w,v)=a(w,v)+M (Bw,Bv)_{Q^{'}}, ~(\cdot,\cdot)_{Q^{'}}$ denotes the inner product on
$Q^{'}$, and $M$ is a parameter to be chosen properly.

Let $V_h\subset V$ and $Q_h\subset Q$ be finite dimensional subspaces of $V$ and $Q$ such that
the following assumption is satisfied:
\begin{itemize}
  \item $A_3:$~discrete inf-sup condition holds that
\begin{equation}\label{C2}
  \inf_{q_h\in Q_h}\sup_{v_h\in V_h}
  \frac{b(v_h,q_h)}{\|v_h\|_V\|q_h\|_Q}\geq  \beta_1>0,
  \end{equation}.
\end{itemize}
where $ \beta_1$ is independent of $h$.

We consider the discretization of the problem (\ref{A1}) as follows:
Find~$(w_h,p_h)\in V_h\times Q_h$~such~that
\begin{equation}\label{A4}
\left\{
\begin{array}{l}
A(w_h,v_h)+ b(v_h,p_h)=\langle f,v_h \rangle+M(g,Bv_h)_{Q^{'}}
~\forall v_h\in V_h,\\
                  b(w_h,q_h)=\langle g,q_h \rangle~~\forall q_h\in
                  Q_h.
\end{array}
\right.
\end{equation}
\begin{remark}
Note that when $M=0$, (\ref{A4}) becomes the classical mixed finite element method. We
call (\ref{A4}) the modified mixed finite element.
\end{remark}
\begin{lemma}\label{A2}(see \cite{brezzi1991mixed})
$A_1$ is equivalent to the following condition,
the operator $B:V^{\bot} \rightarrow Q^{'}$ is an isomorphism, and
\begin{equation}\label{A3}
\|Bv\|_{Q^{'}}\geq \beta \|v\|_V~~\forall v\in V^{\bot}.
\end{equation}
\end{lemma}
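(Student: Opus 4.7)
The plan is to identify $V$ and $Q$ with their own duals via the Riesz isomorphisms, so that $B$ may be regarded as a Hilbert-space operator $B:V\to Q$ with adjoint $B^\ast:Q\to V$ satisfying $(Bv,q)_Q = b(v,q) = (v,B^\ast q)_V$. Under this identification, assumption $A_1$ is exactly the dual statement that $B^\ast$ is bounded below: $\|B^\ast q\|_V \geq \beta\|q\|_Q$ for every $q\in Q$. With this in hand the lemma reduces to the classical Hilbert-space fact that an operator bounded below by $\beta$ induces an isomorphism between its range and its domain with inverse of norm at most $\beta^{-1}$; here $V^\perp$ is interpreted as $K^\perp$, the orthogonal complement of $K=\ker(B)$ inside $V$.

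For the forward implication, I would argue as follows. Since $B^\ast$ is bounded below, it is injective with closed range, and therefore $\mathrm{range}(B^\ast)=\overline{\mathrm{range}(B^\ast)}=(\ker B)^\perp = V^\perp$. Thus every $v\in V^\perp$ has a unique representation $v=B^\ast q$ with $\|q\|_Q\leq \beta^{-1}\|v\|_V$. The lower bound on $B$ then follows from a testing computation:
\begin{equation*}
\|Bv\|_{Q'} \;\geq\; \frac{b(v,q)}{\|q\|_Q} \;=\; \frac{(v,B^\ast q)_V}{\|q\|_Q} \;=\; \frac{\|v\|_V^2}{\|q\|_Q} \;\geq\; \beta\|v\|_V.
\end{equation*}
Injectivity of $B|_{V^\perp}$ is immediate from $V^\perp\cap K=\{0\}$, and surjectivity onto $Q'$ follows from the closed range theorem together with the injectivity of $B^\ast$ (equivalently, $\mathrm{range}(B)=(\ker B^\ast)^\circ=Q'$).

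For the converse, assume $B:V^\perp\to Q'$ is an isomorphism with $\|Bv\|_{Q'}\geq\beta\|v\|_V$ for $v\in V^\perp$. I would pass to the adjoint: the adjoint of an isomorphism bounded below by $\beta$ is itself an isomorphism bounded below by $\beta$ on the dual norms. Since $\langle (B|_{V^\perp})^\ast q,v\rangle = b(v,q)$ for all $v\in V^\perp$, this yields
\begin{equation*}
\sup_{v\in V^\perp\setminus\{0\}} \frac{b(v,q)}{\|v\|_V} \;\geq\; \beta\|q\|_Q,
\end{equation*}
and enlarging the supremum to all of $V\setminus\{0\}$ only increases it, which recovers $A_1$.

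The step I expect to require the most care is the identification $\mathrm{range}(B^\ast)=V^\perp$ in the forward direction. Everything hinges on closedness of $\mathrm{range}(B^\ast)$, and it is precisely the \emph{quantitative} inf-sup bound — rather than mere injectivity of $B^\ast$ — that supplies this closedness through the standard bounded-below-implies-closed-range argument. Once this identification is secured, both directions, as well as the uniform constant $\beta$, fall out cleanly from Hilbert-space duality.
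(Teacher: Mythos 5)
Your argument is correct: the Riesz identification, the bounded-below-implies-closed-range step giving $\mathrm{range}(B^\ast)=(\ker B)^{\perp}$, and the duality computation recovering the constant $\beta$ in both directions are all sound, with $V^{\bot}$ rightly read as $K^{\perp}$. The paper offers no proof of this lemma, only the citation to Brezzi--Fortin, and your proof is essentially the standard closed-range/duality argument given there, so there is nothing to flag.
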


\begin{theorem}\label{N5} Under the assumptions $A_1$ and $A_2$, there is a constant $M_0$
such that the discrete problem (\ref{A4}) is stable for any $M\geq M_0$ in the sense that there exists a positive constant $\alpha_1 $ such that
\begin{equation}
 A(v_h,v_h)\geq \alpha_1 \|v_h\|_V^2 ~~\forall~v_h\in V_h.
\end{equation}
\end{theorem}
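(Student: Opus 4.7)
The plan is to decompose an arbitrary $v_h\in V_h$ along the $V$-orthogonal splitting $V = K \oplus V^{\bot}$, use the kernel coercivity $A_2$ to control the $K$-part, and use the penalty $M\|Bv_h\|_{Q^{'}}^2$ combined with Lemma~\ref{A2} to control the $V^{\bot}$-part. Concretely, I write $v_h = v_0 + v_0^{\bot}$ with $v_0\in K$ and $v_0^{\bot}\in V^{\bot}$; crucially this is an orthogonal decomposition in the Hilbert space $V$, so the summands need not lie in $V_h$, but we still have $\|v_h\|_V^2 = \|v_0\|_V^2 + \|v_0^{\bot}\|_V^2$. Since $Bv_0=0$, Lemma~\ref{A2} delivers $\|Bv_h\|_{Q^{'}} = \|Bv_0^{\bot}\|_{Q^{'}} \ge \beta\|v_0^{\bot}\|_V$.

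Next I expand $a(v_h,v_h)$ bilinearly into four terms. The pure-kernel piece is bounded below by $\alpha\|v_0\|_V^2$ via $A_2$, while the two cross terms and the $V^{\bot}$-diagonal term are bounded below using continuity $|a(u,v)|\le \|a\|\|u\|_V\|v\|_V$, producing a contribution of the form $-2\|a\|\|v_0\|_V\|v_0^{\bot}\|_V - \|a\|\|v_0^{\bot}\|_V^2$. A standard Young's inequality then absorbs the cross term into $\tfrac{\alpha}{2}\|v_0\|_V^2$ at the cost of a fixed multiple of $\|v_0^{\bot}\|_V^2$, yielding $a(v_h,v_h)\ge \tfrac{\alpha}{2}\|v_0\|_V^2 - C\|v_0^{\bot}\|_V^2$ with $C$ depending only on $\|a\|$ and $\alpha$. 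Adding the penalty contribution $M\|Bv_h\|_{Q^{'}}^2 \ge M\beta^2\|v_0^{\bot}\|_V^2$ and choosing $M_0$ so that $M\beta^2 - C \ge \tfrac{\alpha}{2}$ for all $M\ge M_0$ gives $A(v_h,v_h)\ge \tfrac{\alpha}{2}\|v_h\|_V^2$, establishing the claim with $\alpha_1 = \alpha/2$.

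The only conceptual subtlety is performing the orthogonal splitting in $V$ rather than in $V_h$: a discrete splitting would demand a uniform Fortin-type operator and would drag the discrete inf-sup $A_3$ into the proof. The point of the statement is that only the continuous hypotheses $A_1$ (through Lemma~\ref{A2}) and $A_2$ are required for the discrete coercivity; $A_3$ enters only later when one proves inf-sup and error estimates for the modified mixed system. Apart from this observation, the argument is purely Young's-inequality bookkeeping, and the threshold $M_0$ as well as $\alpha_1$ are manifestly independent of $h$.
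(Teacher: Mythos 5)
Your proof is correct and follows essentially the same route as the paper's: the same $V$-orthogonal splitting $v_h=\varphi+\varphi^{\bot}$ with $\varphi\in K$, the same use of Lemma~\ref{A2} to get $\|Bv_h\|_{Q'}\ge\beta\|\varphi^{\bot}\|_V$, and the same continuity-plus-Young's-inequality bookkeeping leading to a threshold $M_0$ of the form $(\tfrac{\alpha}{2}+C_2+\tfrac{2C_1^2}{\alpha})/\beta^2$. Your explicit remark that the decomposition lives in $V$ rather than $V_h$ (so that $A_3$ is not needed here) is a point the paper leaves implicit, but the argument is the same.
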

\begin{proof}
For any $v_h \in V_h$, noting that $V_h\subset V$,
we have $v_h=\varphi+\varphi^{\bot}$,
where $\varphi \in K, \varphi^{\bot} \in K^{\bot}$ and $K^{\bot}$ denotes the orthogonal of~$K$~in $V$.
 By Lemma \ref{A2}, we have
\begin{eqnarray*}
A(v_h,v_h)&=&a(v_h,v_h)+M (B v_h,B v_h)_{Q^{'}}\\
&=&a(\varphi+\varphi^{\bot},\varphi+\varphi^{\bot})+M (B \varphi^{\bot} ,B \varphi^{\bot})_{Q^{'}}\\
&\geq&a(\varphi,\varphi)+a(\varphi,\varphi^{\bot})+a(\varphi^{\bot},\varphi)
+a(\varphi^{\bot},\varphi^{\bot})+M\beta^2\|\varphi^{\bot}\|_V.
\end{eqnarray*}
By assumption $A_2$, the continuity of $a(\cdot,\cdot)$ and
the Cauchy inequality, we obtain that
\begin{eqnarray*}
A(v_h,v_h)&\geq & \alpha \|\varphi\|_V^2-
C_1\|\varphi\|_V\|\varphi^{\bot}\|_V
-C_2\|\varphi^{\bot}\|_V^2 +M\beta^2\|\varphi^{\bot}\|_V^2\\
&\geq &(\alpha -\varepsilon
C_1)\|\varphi\|_V^2-(C_2+\frac{C_1}{\varepsilon})\|\varphi^{\bot}\|^2_V+M\beta^2\|\varphi^{\bot}\|_V^2\\
&\geq &(\alpha -\varepsilon C_1)\|\varphi\|_V^2+(M\beta^2-(C_2+\frac{C_1}{\varepsilon}))
\|\varphi^{\bot}\|_V^2.\\
\end{eqnarray*}
Noting that $\|v_h\|^2_V=\|\varphi\|^2_V+\|\varphi^{\bot}\|^2_V$ and choosing
$\varepsilon=\frac{\alpha}{2C_1}, M_0=\displaystyle\frac{\frac{\alpha}{2}+C_2+\frac{2C_1^2}{\alpha}}{\beta^2}$, we have
\begin{eqnarray*}
A(v_h,v_h)\geq \alpha_1 (\|\varphi\|^2_V+\|\varphi^{\bot}\|^2_V)= \alpha_1
\|v_h\|_V^2,
\end{eqnarray*}
which completes the proof.
\end{proof}
Furthermore, noting that $A(\cdot,\cdot)$ is continuous, by the
classic Brezzi theory for the saddle point problem \cite{brezzi1991mixed,brezzi1974existence,arnold1984stable}, we have
\begin{theorem}\label{N3}
Let $w, p$ be the solution of the problem (\ref{A1}), then under the
assumptions $A_1,A_2,A_3$ and providing that $M$ is sufficiently large, the discrete problem
(\ref{A4}) has a unique solution $(w_h,p_h)\in V_h\times Q_h $, which
satisfies
\begin{equation}\label{A5}
\|w-w_h\|_V+\|p-p_h\|_Q \leq C \left( \inf_{v_h\in V_h}\|w-v_h\|_V+
\inf_{q_h\in Q_h}\|p-q_h\|_Q \right),
\end{equation}
where $C$ is a constant  depending on $\alpha_1$ and $\beta_1$.
\end{theorem}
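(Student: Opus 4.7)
The plan is to reduce the theorem to a direct invocation of the classical Brezzi theory for the discrete saddle point problem (\ref{A4}), using Theorem \ref{N5} as the key ingredient that replaces the role of kernel coercivity. So first I would check the three standard Brezzi hypotheses for (\ref{A4}): continuity of $A(\cdot,\cdot)$, coercivity of $A(\cdot,\cdot)$ on the discrete kernel $K_h=\{v_h\in V_h:b(v_h,q_h)=0\ \forall q_h\in Q_h\}$, and the discrete inf-sup condition on $b(\cdot,\cdot)$. Continuity of $A$ is immediate: $a$ is continuous by assumption, and $(Bw,Bv)_{Q'}\leq \|B\|^2\|w\|_V\|v\|_V$, so $|A(w,v)|\leq(\|a\|+M\|B\|^2)\|w\|_V\|v\|_V$. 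Coercivity of $A$ on all of $V_h$ (in particular on $K_h$) with constant $\alpha_1$ is exactly Theorem \ref{N5}, provided $M\geq M_0$. The discrete inf-sup on $b$ is assumption $A_3$.

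Having verified the hypotheses, I would invoke the Brezzi theorem to conclude existence and uniqueness of $(w_h,p_h)\in V_h\times Q_h$ solving (\ref{A4}), with stability bounds whose constants depend only on $\alpha_1$, $\beta_1$, and the continuity constants. This settles the well-posedness half of the claim.

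For the error estimate (\ref{A5}), I would first establish Galerkin orthogonality. Since $(w,p)$ solves (\ref{A1}) and $V_h\subset V$, $Q_h\subset Q$, testing (\ref{A1}) against $(v_h,q_h)\in V_h\times Q_h$ and subtracting (\ref{A4}) gives
\begin{equation*}
A(w-w_h,v_h)+b(v_h,p-p_h)=0,\qquad b(w-w_h,q_h)=0,
\end{equation*}
for all $(v_h,q_h)\in V_h\times Q_h$. Then I would run the standard Cea-type argument for mixed methods: pick arbitrary $v_h\in V_h$ and $q_h\in Q_h$, write $w_h-v_h=(w_h-w)+(w-v_h)$, use the discrete coercivity of $A$ on the discrete kernel together with $A_3$ to control $\|w_h-v_h\|_V$ and $\|p_h-q_h\|_Q$ by the consistency errors $\|w-v_h\|_V$ and $\|p-q_h\|_Q$, and finish with the triangle inequality. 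Taking the infimum over $v_h$ and $q_h$ yields (\ref{A5}) with a constant depending on $\alpha_1$ and $\beta_1$.

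I do not expect any real obstacle: the entire content of the theorem has been absorbed into Theorem \ref{N5}, so the remaining work is bookkeeping around the Brezzi framework. The only point that deserves care is verifying that the right-hand side of the reformulated problem (\ref{A1}) is consistent when restricted to $V_h$, i.e., that the augmentation term $M(g,Bv_h)_{Q'}$ appears identically on both sides so that Galerkin orthogonality for $A$ holds without extra residuals; this is direct from the construction of (\ref{A1}) and (\ref{A4}).
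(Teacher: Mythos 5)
Your proposal is correct and follows essentially the same route as the paper, which simply notes the continuity of $A(\cdot,\cdot)$ and invokes the classical Brezzi theory with the coercivity from Theorem \ref{N5} and the discrete inf-sup condition $A_3$. You merely spell out the details (Galerkin orthogonality and the Cea-type argument) that the paper leaves to the cited references.
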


\section{Application to the nonlinear elasticity problem}
\label{ATTN}
\setcounter{equation}{0}
We now apply the abstract stabilized framework proposed in the previous section to nonlinear incompressible
elasticity problems. Suppose that~$\Omega$~is connected, we consider the d-dimensional Dirichlet boundary
value problem (other boundary value problems are similar) of (\ref{E7}),  which
means that $V=(H_0^1(\Omega))^d,Q=L^2_0(\Omega)$. We choose
$V_h~\hbox{and}~Q_h$ as finite element spaces such that $A_3$ is satisfied.
Furthermore, $K=\{v\in (H_0^1(\Omega))^d: \hbox{div} v=0 \}$. 
We rewrite (\ref{E7}) as:~Find~$(w,p)\in V\times Q$~such~that
\begin{equation}\label{AE1}
\left\{
\begin{array}{l}
A(w,v)+b(v,p)=-\mathcal{R}_u((\hat{u},\hat{p}),v)-M
                  \mathcal{R}_p((\hat{u},\hat{p}),\hbox{div}v)
~~\forall v \in V,\\
                  b(w,q)=-\mathcal{R}_p((\hat{u},\hat{p}),q)~~\forall q\in Q,
\end{array}
\right.
\end{equation}
where $A(w,v)=a(w,v)+M (\hbox{div}w,\hbox{div}v)$, $a(\cdot,\cdot)~\hbox{and}~b(\cdot,\cdot)$
~are defined by
(\ref{E10}),~$(\cdot,\cdot)$ denotes the inner product on
$L^2(\Omega)$, and $M$ is a parameter chosen properly.

The discretization of the problem (\ref{AE1}) is as follows:
Find~$(w_h,p_h)\in V_h\times Q_h$~such~that
\begin{equation}\label{AE2}
\left\{
\begin{array}{l}
A(w_h,v_h)+b(v_h,p_h)=-\mathcal{R}_u((\hat{u},\hat{p}),v_h)-M
                  \mathcal{R}_p((\hat{u},\hat{p}),\hbox{div}v_h)
~~\forall v_h \in V_h,\\
                  b(w_h,q_h)=-\mathcal{R}_p((\hat{u},\hat{p}),q_h)~~\forall q_h\in
                  Q_h.
\end{array}
\right.
\end{equation}

\begin{lemma}(\cite{girault1986finite}, Corollary 2.3)\label{L1}
We have
$$
K^{\perp}=\{(-\triangle)^{-1}\hbox{grad} f: f \in L_0^2(\Omega)
\}.
$$
\end{lemma}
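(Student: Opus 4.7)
The plan is to establish the two set inclusions separately, using the fact that the inner product on $V=(H^1_0(\Omega))^d$ is $(u,v)_V=(\nabla u,\nabla v)$ so orthogonality is defined via the Dirichlet form.

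For the easy inclusion $\supseteq$, I would fix $f\in L^2_0(\Omega)$, let $u=(-\triangle)^{-1}\mathrm{grad}\,f\in V$, and for any $v\in K$ compute
\begin{equation*}
(u,v)_V=(\nabla u,\nabla v)=\langle -\triangle u,v\rangle=\langle \mathrm{grad}\,f,v\rangle=-(f,\mathrm{div}\,v)=0,
\end{equation*}
using integration by parts in $H^{-1}\times H^1_0$ duality and $\mathrm{div}\,v=0$. This shows $u\in K^{\perp}$.

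For the nontrivial inclusion $\subseteq$, I would take $u\in K^{\perp}$ and consider the functional $F\in H^{-1}(\Omega)^d$ defined by $\langle F,v\rangle:=(\nabla u,\nabla v)$ for $v\in V$; equivalently $F=-\triangle u$ in the distributional sense. Since $u\perp K$, this functional vanishes on all divergence-free test fields in $(H^1_0(\Omega))^d$. The classical de Rham/Ne\v{c}as theorem (which on a bounded connected Lipschitz domain asserts that any element of $H^{-1}(\Omega)^d$ that annihilates the divergence-free subspace of $H^1_0(\Omega)^d$ is the gradient of some $L^2(\Omega)$ function, unique up to an additive constant) then gives an $f\in L^2(\Omega)$ with $-\triangle u=\mathrm{grad}\,f$; normalizing by the additive constant to put $f\in L^2_0(\Omega)$ yields $u=(-\triangle)^{-1}\mathrm{grad}\,f$, as desired.

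The main obstacle is clearly the second step: the characterization of annihilators of divergence-free fields as gradients is the substantive classical result (equivalent, by duality, to the surjectivity of $\mathrm{div}:(H^1_0(\Omega))^d\to L^2_0(\Omega)$, i.e., to the continuous inf-sup condition $A_1$ for the Stokes form). In practice I would simply invoke this theorem rather than reprove it; indeed the whole lemma is quoted from Girault--Raviart precisely because its content reduces to that well-known fact.
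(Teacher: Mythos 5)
The paper offers no proof of this lemma -- it is quoted verbatim from Girault--Raviart (Corollary 2.3) -- and your argument is correct and is essentially the standard proof given in that reference: the inclusion $\supseteq$ by integration by parts against divergence-free fields, and the inclusion $\subseteq$ via the de~Rham/Ne\v{c}as characterization of annihilators of divergence-free fields as gradients (equivalently, the surjectivity of $\mathrm{div}$ onto $L^2_0(\Omega)$). You also correctly identify the one point that must be fixed for the statement to be literally true, namely that $K^{\perp}$ is taken with respect to the Dirichlet inner product $(\nabla u,\nabla v)$ on $(H^1_0(\Omega))^d$, which is the convention of the cited source.
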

Let~$K^0=\{y \in (H^{-1}(\Omega))^d: <y,\phi>=0~~\forall ~\phi \in K\}$,~where~$<\cdot,\cdot>$~denotes the
duality paring between~$(H^{-1}(\Omega))^d$~and~$(H^{1}_0(\Omega))^d$.
\begin{lemma}(\cite{girault1986finite}, Corollary 2.4)\label{L2}
Let~$\Omega$~be connected. Then
\begin{enumerate}
  \item the operator \hbox{grad} is an isomorphism from $L^2_0{(\Omega)}$~onto $K^0$, and
  \item the operator \hbox{div} is an isomorphism from $K^{\perp}$~onto $L^2_0{(\Omega)}$.
\end{enumerate}
\end{lemma}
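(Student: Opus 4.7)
The key analytic ingredient behind both statements is de Rham's theorem in its Hilbert-space form, equivalent to the Ne\v{c}as inequality $\|q\|_{L^2}\le C\|\nabla q\|_{H^{-1}}$ for $q\in L^2_0(\Omega)$ on a bounded connected Lipschitz domain $\Omega$. My plan is to prove Part~1 directly and then deduce Part~2 from it via a duality and orthogonal-decomposition argument, exploiting that $\hbox{grad}$ and $-\hbox{div}$ are formal adjoints between $L^2_0(\Omega)$ and $(H^1_0(\Omega))^d$.

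For Part~1, I would proceed in the standard four steps: well-defined, image in $K^0$, injective, surjective with closed range. First, for $p\in L^2_0(\Omega)$ the distribution defined by $\langle\hbox{grad}\,p,\phi\rangle=-\int_\Omega p\,\hbox{div}\,\phi$ is bounded on $(H^1_0(\Omega))^d$ with norm at most a constant times $\|p\|_{L^2}$, and it vanishes on $\phi\in K$, so $\hbox{grad}$ maps $L^2_0(\Omega)$ continuously into $K^0$. Injectivity follows from $\hbox{grad}\,p=0$ together with connectedness of $\Omega$ (which forces $p$ constant) and the zero-mean constraint (which then forces $p=0$). Surjectivity onto $K^0$ is exactly de Rham's theorem: any $f\in(H^{-1}(\Omega))^d$ annihilating divergence-free test fields is the gradient of some $L^2$ function, uniquely normalized in $L^2_0(\Omega)$. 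The bound $\|p\|_{L^2}\le C\|\hbox{grad}\,p\|_{H^{-1}}$ is the Ne\v{c}as inequality and also drops out of the open mapping theorem once surjectivity is in hand.

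For Part~2, integration by parts gives $\int_\Omega\hbox{div}\,v=\int_{\partial\Omega}v\cdot n=0$ for $v\in(H^1_0(\Omega))^d$, so $\hbox{div}$ does land in $L^2_0(\Omega)$. Using the orthogonal decomposition $V=K\oplus K^\perp$ in $(H^1_0(\Omega))^d$, injectivity on $K^\perp$ is immediate: if $w\in K^\perp$ and $\hbox{div}\,w=0$, then $w\in K\cap K^\perp=\{0\}$. For surjectivity, given $q\in L^2_0(\Omega)$ one observes that $\phi\mapsto -(q,\hbox{div}\,\phi)$ belongs to $K^0$, so Part~1 provides $p\in L^2_0(\Omega)$ with $\hbox{grad}\,p$ equal to this functional; a standard Stokes-type problem (or Lemma~\ref{L1} read in the direction $L^2_0\to K^\perp$) then yields $w\in V$ with $\hbox{div}\,w=q$, and projecting onto $K^\perp$ preserves the divergence and gives the desired preimage. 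Continuity of the inverse follows either from the open mapping theorem or, constructively, from Lemma~\ref{L1}, which represents $K^\perp$ as $(-\triangle)^{-1}\hbox{grad}(L^2_0(\Omega))$.

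The main obstacle is the surjectivity of $\hbox{grad}$ in Part~1, i.e.\ de Rham's theorem / the Ne\v{c}as inequality on a bounded connected Lipschitz domain. This is the only genuinely non-elementary ingredient, usually established via an explicit right inverse for $\hbox{div}$ such as the Bogovski\u{\i} operator built on star-shaped subdomains and glued by a partition of unity, or by a compactness-plus-duality argument based on Rellich's theorem; the remaining steps reduce to integration by parts, Hilbert-space orthogonality, and the open mapping theorem.
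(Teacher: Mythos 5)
The paper gives no proof of this lemma: it is quoted verbatim from Girault--Raviart (Corollary 2.4), whose argument is essentially the one you outline, namely de Rham's theorem plus the Ne\v{c}as inequality for Part~1 and a duality argument combined with the decomposition $V=K\oplus K^{\perp}$ for Part~2. Your treatment of Part~1 is complete and correct, modulo the (acknowledged) black box of de Rham's theorem.

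For Part~2, however, the surjectivity step as written does not close. The functional $\phi\mapsto -(q,\hbox{div}\,\phi)$ \emph{is}, by definition, the distribution $\hbox{grad}\,q$, so applying Part~1 to it merely returns $p=q$ and produces no vector field $w$. The subsequent appeal to ``a standard Stokes-type problem'' is circular, since the well-posedness of that problem rests on the inf-sup condition, which is equivalent to the very surjectivity of $\hbox{div}$ you are trying to establish; likewise, reading Lemma~\ref{L1} ``in the direction $L^2_0\to K^{\perp}$'' presupposes that $\hbox{div}\circ(-\triangle)^{-1}\circ\hbox{grad}$ maps onto $L^2_0(\Omega)$, which is again the claim at issue. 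The clean route---which you in fact gesture at when you call $\hbox{grad}$ and $-\hbox{div}$ formal adjoints---is the closed range theorem: Part~1 shows that $\hbox{grad}:L^2_0(\Omega)\to (H^{-1}(\Omega))^d$ is injective with closed range, hence its adjoint $-\hbox{div}:(H^1_0(\Omega))^d\to L^2_0(\Omega)$ is surjective; since $\hbox{div}$ annihilates $K$, its restriction to $K^{\perp}$ remains surjective, and your injectivity and open-mapping arguments then finish the proof. With that replacement the proposal is correct and coincides with the argument in the cited source.
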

\begin{theorem} If $a(\cdot,\cdot)$
 defined by (\ref{E10}) satisfies
$$
a(v,v)\geq \alpha \|v\|_1^2 ~~ \forall~  v\in K,~ \hbox{for some}~ \alpha>0,
$$
the discrete problem (\ref{AE2}) is stable in the sense that
there exists a positive constant $\alpha_1 $ such that
\begin{equation}
 A(v_h,v_h)\geq \alpha_1 \|v_h\|_1^2~~~\forall  v_h\in V_h.
\end{equation}
\end{theorem}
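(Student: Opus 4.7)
The plan is to deduce this theorem as a direct consequence of Theorem \ref{N5} by matching the abstract setting of Section \ref{ASOM} with the concrete spaces and forms used here.

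First, I would identify the ingredients. Set $V=(H_0^1(\Omega))^d$ with the $\|\cdot\|_1$ norm and $Q=L_0^2(\Omega)$ with the $L^2$ norm. The form $b(w,q)=\int_\Omega q\,\mathrm{div}\,w$ gives the operator $B:V\to Q'$, and under the Riesz identification $Q'\simeq L_0^2(\Omega)$ we have $Bw=\mathrm{div}\,w$ and $(Bw,Bv)_{Q'}=(\mathrm{div}\,w,\mathrm{div}\,v)_{L^2}$. Consequently the stabilized form $A(w,v)=a(w,v)+M(\mathrm{div}\,w,\mathrm{div}\,v)$ appearing in (\ref{AE1})–(\ref{AE2}) is exactly the concrete realization of the abstract $A(\cdot,\cdot)$ from (\ref{A1}).

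Next, I would verify the two assumptions needed by Theorem \ref{N5}. Assumption $A_1$, the continuous inf-sup condition, follows from Lemma \ref{L2}: since $\mathrm{div}:K^{\perp}\to L_0^2(\Omega)$ is an isomorphism, the characterization in Lemma \ref{A2} yields $\|Bv\|_{Q'}\geq\beta\|v\|_1$ on $K^{\perp}$, which is equivalent to $A_1$. Assumption $A_2$, the coercivity of $a(\cdot,\cdot)$ on $K=\{v\in V:\mathrm{div}\,v=0\}$, is precisely the hypothesis of the present theorem.

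Finally, with $A_1$ and $A_2$ in hand, Theorem \ref{N5} applies verbatim in the concrete setting: there exists $M_0>0$, depending only on $\alpha$, on the continuity constant of $a(\cdot,\cdot)$, and on the continuous inf-sup constant $\beta$, such that for every $M\geq M_0$ the inequality $A(v_h,v_h)\geq\alpha_1\|v_h\|_1^2$ holds on all of $V_h$ (and in fact on all of $V$). I do not expect a serious obstacle; the only point that requires care is the identification of the dual pairing $(g,Bv)_{Q'}$ with the $L^2$ inner product via the Riesz map, which is legitimate because $Q=L_0^2(\Omega)$ is itself a Hilbert space, and the verification that the right-hand side of (\ref{AE1}) indeed matches the concrete stabilization built from $\mathcal{R}_p((\hat u,\hat p),\mathrm{div}\,v)$.
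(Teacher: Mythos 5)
Your proposal is correct and follows exactly the route the paper intends: the paper's own proof is the one-line remark ``By the abstract framework, the proof is obvious,'' i.e.\ an appeal to Theorem \ref{N5}, and you have simply filled in the details of that appeal (identifying $B$ with $\mathrm{div}$ via the Riesz map, obtaining $A_1$ from Lemma \ref{L2} together with Lemma \ref{A2}, and taking $A_2$ as the stated hypothesis). No discrepancy with the paper's argument.
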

\begin{proof}
By the abstract framework, the proof is obvious.
\end{proof}
Also from the abstract framework in the previous section, we can get an approximate
accuracy result similar to (\ref{A5}).
\begin{theorem}
Let $w, p$ be the solution of the problem (\ref{E7}), then under the assumptions $A_2$ and $A_3$ 
and providing that $M$ is sufficiently large, the discrete problem
(\ref{AE2}) has a unique solution $(w_h,p_h)\in V_h\times Q_h $, which
satisfies
\begin{equation}
\|w-w_h\|_1+\|p-p_h\|_0 \leq C \left( \inf_{v_h\in V_h}\|w-v_h\|_1+
\inf_{q_h\in Q_h}\|p-q_h\|_0 \right),
\end{equation}
where the constant $C$ depends on $\alpha_1, \beta_1$.
\end{theorem}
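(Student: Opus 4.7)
The plan is to reduce this statement to the abstract framework of Section \ref{ASOM}, specifically Theorem \ref{N3}, by identifying the concrete problem (\ref{AE2}) with an instance of the abstract problem (\ref{A4}) and then verifying the hypotheses $A_1$, $A_2$, $A_3$ in the concrete setting $V=(H_0^1(\Omega))^d$, $Q=L_0^2(\Omega)$, with $a,b$ given by (\ref{E10}).

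First I would make the identification explicit. Since $Q=L_0^2(\Omega)$ is identified with its dual via the $L^2$ inner product, the operator $B:V\to Q'$ defined by $\langle Bw,q\rangle = b(w,q) = \int_\Omega q\,\hbox{div}\,w$ is represented simply by $Bw=\hbox{div}\,w$ (the mean-zero constraint is automatic for $w\in(H_0^1(\Omega))^d$ by the divergence theorem). Therefore $(Bw,Bv)_{Q'}=(\hbox{div}\,w,\hbox{div}\,v)_{L^2(\Omega)}$, so the stabilized form $A(w,v)=a(w,v)+M(\hbox{div}\,w,\hbox{div}\,v)$ and the modified right-hand side in (\ref{AE1})--(\ref{AE2}) coincide exactly with those of (\ref{A1})--(\ref{A4}).

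Next I would check the three abstract assumptions. Assumption $A_2$ (coercivity of $a$ on $K=\{v\in V:\hbox{div}\,v=0\}$) is stated as a hypothesis of the theorem. Assumption $A_3$ (discrete inf-sup) is built into the choice of $V_h,Q_h$. For $A_1$, the continuous inf-sup, I would appeal to Lemma \ref{L2}: since $\hbox{div}:K^\perp\to L_0^2(\Omega)$ is an isomorphism, the norm equivalence $\|\hbox{div}\,v\|_0\simeq\|v\|_1$ on $K^\perp$ together with Lemma \ref{A2} produces a constant $\beta>0$ satisfying the continuous inf-sup condition. Continuity of $b$ on $V\times Q$ is standard, and continuity of $a$ follows from the assumed regularity of the frozen state $(\hat u,\hat p)$, which keeps $\hat F$ and $\hat F^{-1}$ bounded in $L^\infty(\Omega)$ so that every term in (\ref{E10}) is bounded by $C\|w\|_1\|v\|_1$.

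Having verified all three hypotheses, Theorem \ref{N5} supplies, for $M$ at least the threshold $M_0$, the coercivity estimate $A(v_h,v_h)\geq\alpha_1\|v_h\|_1^2$ on the full space $V_h$, and then the classical Brezzi theory, equivalently Theorem \ref{N3}, yields existence, uniqueness of $(w_h,p_h)$ and the Cea-type bound with constant depending only on $\alpha_1,\beta_1$. I do not expect a substantive obstacle: the only mildly delicate point is making sure that the identification of the abstract $Q'$ inner product with the concrete $L^2$ inner product of divergences is faithful, and that the continuity constants of $a$ and $b$ do not hide an unwanted dependence on $M$, which they do not since $M$ enters only through the explicit stabilization term absorbed into $\alpha_1$.
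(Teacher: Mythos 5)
Your proposal is correct and follows essentially the same route as the paper, which proves this theorem simply by invoking the abstract framework of Section \ref{ASOM} (Theorems \ref{N5} and \ref{N3}), with Lemmas \ref{L1} and \ref{L2} stated precisely to supply the continuous inf-sup condition $A_1$ in the concrete setting. You merely make explicit the details the paper leaves implicit (the identification $Bw=\hbox{div}\,w$, the $Q'$--$L^2$ identification, and the continuity of $a$ and $b$), which is a faithful expansion rather than a different argument.
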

\section{Numerical experiment}
\label{NUEX}
\setcounter{equation}{0}
In this section, we report our numerical experiments in regard to the performance of the
stabilization strategy for two simple examples. We consider here simple problems
using some mixed finite element formulations that are known
to be optimal for Stokes equations. We first briefly present the finite element under consideration
and then show the numerical results obtained using such
element.
\subsection{Two examples}\label{TS}
In this subsection, we present two simple problems that will be
used in subsection \ref{NR} to discuss the performance of the stabilization strategy
proposed in Section 3. Using the usual Cartesian
coordinates $(X,Y)$, we consider a square material
body whose reference configuration is $\Omega = (-1,1) \times (-1, 1)$. We denote 
$\Gamma = [-1,1] \times \{1\}$ as the upper part of its boundary, while
the remaining part of $\partial\Omega$ is denoted with $\Gamma_{D}$.
The total energy is assumed to be as in (\ref{E3}), where the external
loads are given by the vertical uniform body forces: $b=\gamma f$,
where $f=(0,1)^{\top}$.
The two problems differ in regard to the imposed boundary conditions. More precisely:
\begin{itemize}
  \item \textbf{Problem 1.} We set clamped boundary conditions on $\Gamma_{D}$,
  but traction-free boundary conditions on $\Gamma$.
  \item \textbf{Problem 2.} We set vanishing normal displacements on $\Gamma_{D}$,
 but traction-free boundary conditions on $\Gamma$.
\end{itemize}
It is easy to see that both problems admit a trivial solution
for every $\gamma \in R,i.e.~(\hat{u},\hat{p})=(0,\gamma r)$, where $r=r(X,Y)=1-Y$.

For the problems under investigation, the corresponding linearized
problems (cf. (\ref{E10})) can both be written as:
\hbox{Find}~$(w,p)\in V\times Q$~such~that
\begin{equation}\label{N2}
\left\{
\begin{array}{l}
2\mu\displaystyle\int_\Omega\epsilon (w):\epsilon  (v)-\gamma
\displaystyle\int_\Omega r(\nabla w)^{\top}:\nabla (v)+\displaystyle\int_\Omega p~\hbox{div}
v=\delta \gamma \int_\Omega f\cdot v,
~\forall v\in V,\\
                  \displaystyle\int_\Omega q~\hbox{div} w=0~~\forall q\in Q.
\end{array}
\right.
\end{equation}
where $\delta \gamma$ is the increment of the parameter of $\gamma$.

For these two different problems,
the spaces $V$ and $Q$ are defined as follows:
\begin{itemize}
  \item \textbf{Problem 1.} $V=\{v\in H^1(\Omega)^2:~v|_{\Gamma_D}=0\}; Q=L^2(\Omega)$.
  \item \textbf{Problem 2.} $V=\{v\in H^1(\Omega)^2:~(v\cdot n)|_{\Gamma_D}=0\}; Q=L^2(\Omega)$,
  where $n$ denotes the outward normal vector.
\end{itemize}
The stable discrete formulation reads as follows (see (\ref{AE2})):
Find~$(w_h,p_h)\in V_h\times Q_h$~such~that
\begin{equation}\label{N1}
\left\{
\begin{array}{l}
A(w_h,v_h)+b(v_h,p_h)=\delta \gamma \int_\Omega f\cdot v_h,
~\forall v_h \in V_h,\\
                  b(w_h,q_h)=0,~~\forall q_h\in
                  Q_h.
\end{array}
\right.
\end{equation}
where $A(w,v)=2\mu\displaystyle\int_\Omega\epsilon (w):\epsilon  (v)-\gamma
\displaystyle\int_\Omega r(\nabla w)^{\top}:\nabla (v)+M(\gamma) \int_\Omega\hbox{div}w~\hbox{div}v$;\\
$~b(w,p)=\int_\Omega p~\hbox{div}v$. Here $M(\gamma)$ is a parameter chosen
 depending on $\gamma$.
\begin{remark}
For Problem $1$, it has been theoretically proved in \cite{auricchio2005stability}
that the continuous problem (\ref{N2}) is stable when $\gamma<3\mu$.
\end{remark}

\subsection{The MINI element}
The considered scheme for (\ref{N1}) is the MINI element (see \cite{arnold1984stable}). Let $T_h$
be a triangular mesh of $\Omega$ with the mesh size $h$. For the discretization
of the displacement field, we take
$$
V_h=\{v_h \in V: v_h|_{T} \in P_1(T)^2+B(T)^2~\forall~T \in T_h\},
$$
where $P_1(T)$ is the space of linear functions on $T$, and $B(T)$ is the linear
space generated by $b_T$, the standard cubic bubble function on $T$.
For the pressure discretization, we take
$$
Q_h=\{q_h \in H^1(\Omega)\cap Q : q_h|_{T} \in P_1(T)~\forall~T \in T_h\}.
$$
\subsection{Numerical results}
\label{NR}
\setcounter{equation}{0}
We now study the stability performance of the discretized model problems
by means of the modified mixed finite element formulations briefly
described above. It has been theoretically proved and numerically verified
in \cite{auricchio2005stability} that for Problem $1$ the classical mixed finite element method is stable
when $\gamma <\mu$, but unstable when $\gamma > \frac{3}{2}\mu$.
In this numerical experiment, we will demonstrate that the modified mixed finite element
method is stable for both Problem $1$ and Problem $2$ when $- \infty <\gamma< 3 \mu$ (which is the stability range for the continuous case of Problem $1$ in \cite{auricchio2005stability}) as predicated by our Theorem \ref{N5}.

Noting Theorem \ref{N5}, 
we study the eigenvalues
of the matrix induced by the bilinear form $A(\cdot,\cdot)$
for the problems under consideration. The first loads for which we find a negative eigenvalue are the critical ones. 
We start from $\gamma=0$ for both positive and negative loading conditions, i.e., for $\gamma<0$ and $\gamma>0$.
We indicate as the critical loads $\gamma_{m,h}$ and $\gamma_{M,h}$
the first load values for which we find a negative eigenvalue.
A subsequent bisection-type procedure is used to increase the accuracy
of the critical load detections. The corresponding nondimensional
quantities are denoted with $\tilde{\gamma}_{m,h}$ and $\tilde{\gamma}_{M,h}$, respectively,
where $\tilde{\gamma}=\frac{\gamma L}{\mu}$. Here, $L$ is some problem characteristic
length, set equal to $1$ for simplicity, consistents also with the geometry of the model
problems. If we do not detect any negative eigenvalue 
for extremely large values of the load multiplier $(\tilde{\gamma}> 10^6)$, we set $\tilde{\gamma}=\infty$. 
Noting that the bound constant $C_1$ for the formulation
$a(\cdot,\cdot)$ is of order $\gamma+2 \mu $ and the stable constant of
$a(\cdot,\cdot)$ is of order $\mu$, we can choose
$M_0=m_1 |\tilde{\gamma}|+m_2 \tilde{\gamma}^2$ by the proof of Theorem \ref{N5}. Hence, in the codes, we set
$\mu=40, M(\gamma)=m_1 |\tilde{\gamma}|+m_2 \tilde{\gamma}^2$,
then we adjust the parameter $\tilde{\gamma}$, $m_1$ and $m_2$ 
to investigate the stability performance. Furthermore, we take $m_1=320,m_2=0$ for Problem $1$
and $m_1=320,m_2=1.36$ for Problem $2$.

\begin{table}[h]
\centering\caption
{
 Stability Limits for Problem $1$~
 }
\begin{tabular}{|c|c|c|}
\hline
 Nodes ~~~~~~& $\tilde{\gamma}_{m,h}$ & ~~~~~~$\tilde{\gamma}_{M,h}$ ~~~~~~\\
 $5\times 5$ ~~~~~~&~~~~~~~ $-\infty$ ~~~~~~&~~~~~~$+\infty$ ~~~~~~\\
 $9\times 9$ ~~~~~~&~~~~~~ ~$-\infty$~~~~~~ &~~~~~~$14.50$ ~~~~~~\\
 $17\times 17$~~~~~~ & ~~~~~~~$-\infty$~~~~~~ &~~~~~~$8.25$~~~~~~ \\
 $33\times 33$ ~~~~~~& ~~~~~~~$-\infty$~~~~~~ &~~~~~~$7.13$ ~~~~~~\\
 \hline
 \end{tabular}
\end{table}


\begin{table}[h]
\centering\caption
{
 Stability Limits for Problem $2$~
 }
\begin{tabular}{|c|c|c|}
\hline
 Nodes ~~~~~~~& $\tilde{\gamma}_{m,h}$ &~~~~~~$\tilde{\gamma}_{M,h}$ ~~~~~~\\
 $5\times 5$ ~~~~~~~&~~~~~~~ $-\infty$~~~~~~&~~~~~~$+\infty$ ~~~~~~\\
 $9\times 9$ ~~~~~~~&~~~~~~~ $-\infty$~~~~~~ &~~~~~~$3.88$ ~~~~~~\\
 $17\times 17$~~~~~~~ &~~~~~~~$-\infty$~~~~~~ &~~~~~~$3.38$~~~~~~ \\
 $33\times 33$ ~~~~~~~& ~~~~~~~$-\infty$~~~~~~ &~~~~~~$3.23$ ~~~~~~\\
 \hline
 \end{tabular}
\end{table}

To verify the convergence result (\ref{N3}), we set another data for (\ref{N2}) that
$f=(-e^{x}(1-y),e^{x})^{\top}$ and the true solution is $(w,p)=(0,\delta\gamma e^{x}(1-y))$.
\begin{table}[h]
\centering\caption
{The Convergence of $\tilde{\gamma}=7.125$ for Problem $1$}
\begin{tabular}{|c|c|c|c|}
  \hline
Nodes & $\|p-p_h\|_{0}$ & $\|w-w_h\|_{1}$ &$order$ \\
$5\times 5$ &$6.0469\times 10^{-2}$ &$1.0518\times 10^{-6}$ &$--$\\
$9\times 9$ &$1.5141\times 10^{-2}$ &$1.8890\times 10^{-7}$ &$2$\\
$17\times 17$ &$3.7871\times 10^{-3}$ &$3.0897\times 10^{-8}$ &$2$\\
$33\times 33$  &$9.4692\times 10^{-4}$ &$9.2283\times 10^{-9}$ &$2$\\
  \hline
\end{tabular}
\end{table}
\begin{table}[h]
\centering\caption
{The convergence of $\tilde{\gamma}=3.23$ for Problem $2$}
\begin{tabular}{|c|c|c|c|}
  \hline
Nodes & $\|p-p_h\|_{0}$ & $\|w-w_h\|_{1}$ &$order$ \\
$5\times 5$ &$6.0187\times 10^{-2}$ &$7.5563\times 10^{-6}$ &$--$\\
$9\times 9$ &$1.5129\times 10^{-2}$ &$1.6314\times 10^{-6}$ &$2$\\
$17\times 17$ &$3.7867\times 10^{-3}$ &$3.3616\times 10^{-7}$ &$2$\\
$33\times 33$  &$9.4691\times 10^{-4}$ &$7.7049\times 10^{-8}$ &$2$\\
  \hline
\end{tabular}
\end{table}

Table 1 and Table 2 show that the stabilization strategy together with the corresponding
modified mixed finite element method proposed in this
paper is effective. The stability performance can be improved obviously. In fact,
these values of $\tilde{\gamma}_{M,h}$ are competitive to the ``exact" values claimed in
\cite{auricchio2010importance}.
Furthermore, we can see that the modified mixed finite
element method is also locking-free by verifying the convergence results
(\ref{N3}), which are shown in Table 3 and Table 4. In fact, the classical
mixed finite element method is unstable for the Problem $1$
when $\tilde{\gamma}=7.125$, and hence the convergence fails.
However, the modified method performs well.

\section{Conclusions}
\label{CONL}
\setcounter{equation}{0}
Within the framework of finite elasticity for incompressible materials, it is
well known that the classical mixed finite element discretization can sometimes be unstable even
though the continuous problem is stable. In this paper, we reformulated the continuous
problem and proposed an abstract stabilization strategy based on the new continuous
formulation and obtained a modified mixed finite element method.
We proved theoretically in Section \ref{ASOM} that for a sufficiently large $M$, the modified mixed finite element method
is stable whenever the continuous problem is stable, and the method maintains the
optimal convergence of the classical one. We verified by numerical experiments
in Section \ref{NUEX} that the modified mixed finite element method is much more stable
than the classical one and is also locking-free. The $M_0$ provided in Section
\ref{ASOM} always overestimates the parameter used in practical problems.
However, we can choose the parameter heuristically by analyzing the stability and 
continuity of continuous problems in the numerical experiments presented 
in Section \ref{NUEX}. 

\bibliographystyle{elsarticle-num}
\bibliography{nonlinearelasticity}

\end{document}